\theoremstyle{plain}
\newtheorem{theorem}{Theorem}
\newtheorem{corollary}[theorem]{Corollary}
\date{\today}
\begin{document}

\title{A Bijection on Bilateral Dyck Paths}

\author{Paul R.~G.~Mortimer and Thomas Prellberg\\
\small School of Mathematical Sciences\\[-0.8ex]
\small Queen Mary University of London\\[-0.8ex] 
\small Mile End Road, London E1 4NS, UK\\
\small\tt \{p.r.g.mortimer,t.prellberg\}@qmul.ac.uk
}

\maketitle

\begin{abstract}
It is known that both the number of Dyck paths with $2n$ steps and $k$ peaks, and the number of Dyck paths with $2n$ steps and $k$ steps at odd height follow the Narayana distribution. In this paper we present a bijection which explicitly illustrates this equinumeracy. Moreover, we extend this bijection to bilateral Dyck paths. The restriction to Dyck paths preserves the number of contacts.

\bigskip\noindent \textbf{Keywords:} Dyck path; Bilateral Dyck path; Free Dyck Path; Grand-Dyck Path; Narayana numbers; bijection

\end{abstract}

\section{Introduction}

\label{introduction}

Bilateral Dyck paths are directed walks on $\mathbb{Z}^2$ starting at $(0,0)$ in the $(x,y)$-plane and ending on the line $y=0$, which have steps in the $(1,1)$ (up-step) and $(1,-1)$ (down-step) directions. Let $\cal B$ be the set of all bilateral Dyck paths. Note that in the literature these are also referred to as free Dyck paths \cite{chen2009} or Grand-Dyck paths \cite{manes2012}.
Dyck paths are paths in $\cal B$ which have no vertices with negative $y$-coordinates. Let $\cal D$ be the set of all Dyck paths.  
We define a negative Dyck path to be a bilateral Dyck path of nonzero length which has no vertices with positive $y$-coordinates.

Given $\pi \in \cal B$, we define the semilength $n(\pi)$ to be half the number of
its steps. We say that an up-step is at height $j$ if it starts at a vertex $(i-1,j-1)$ and ends at a vertex $(i,j)$; it is at odd height if and only if $j$ is odd. A down-step is at height $j$ if it starts at a vertex $(i,j)$ and ends at a vertex $(i+1,j-1)$; it is at odd height if and only if $j$ is odd.
Therefore the first and last steps in any Dyck path are at height $1$, and an up-step and its matching down-step have the same height. 
This definition is consistent with that in \cite{osborn2010}. 
We define a peak as an up-step followed immediately by a down-step, and the height of a peak as the height of the steps which form it, or equivalently as the height of the vertex common to both steps of the peak. We define a valley to be a down-step followed immediately by an up-step.
We define a contact as a down-step at height $1$ or an up-step at height $0$. We define a crossing to be either a down-step at height $1$ followed immediately by a down-step at height $0$, or an up-step at height $0$ followed immediately by an up-step at height $1$. 
We define a prime Dyck path to be a Dyck path with exactly one contact.

Let $\cal L$ be the set of words with the symbol set $\{U,D\}$. A bilateral Dyck word of semilength $n$ is an element of $\cal L$ such that the letters $U$ and $D$ each appear $n$ times. A Dyck word of semilength $n$ is a bilateral Dyck word such that no initial segment of the string has more $D$s than $U$s. There is an obvious bijection between paths and words, mapping up-steps to $U$s and down-steps to $D$s, and we will use words and paths interchangeably in this paper. For example, we say that a Dyck word is prime if it corresponds to a prime Dyck path.

It is an established fact that many patterns in Dyck paths are enumerated by Narayana numbers
$$N(n,k)=\frac1n\binom nk\binom n{k-1}\;.$$
For example, there are $N(n,k)$ Dyck paths of semilength $n$ with precisely $k$ peaks, or $k$ up-steps at odd height, or $k-1$ up-steps at even height, and many other statistics \cite{kreweras1986,sulanke1998}.
These are most easily proven by generating function techniques \cite{deutsch1999}. Equinumeracy then follows from showing that different counting problems have the same generating function.

In this note we give a direct bijection between Dyck paths of semilength $n$ with $k$ up-steps at odd height and Dyck paths of semilength $n$ with $k$ peaks. Clearly such a bijection provides more structural insight than a proof using generating function techniques. 

In \cite{osborn2010} a bijection between Dyck paths of semilength $n$ with $k-1$ up-steps at even height and Dyck paths of semilength $n$ with $k$ peaks is given, using the machinery of checkmark sequences. That bijection is given as a restriction of a bijection between bilateral Dyck paths.
 
To make a connection of our result with the work in \cite{osborn2010}, we remark
that any Dyck path with $k$ up-steps at odd height can be mapped bijectively to a Dyck
path with $k-1$ up-steps at even height as follows. If $P$ is a Dyck path of nonzero length, the
corresponding Dyck word $W_P$ can be uniquely decomposed into a word of the form $UW_1DW_2$ where
$W_1$ and $W_2$ are Dyck words. Exchanging $W_1$ and $W_2$ changes the parity of the steps of the sub-paths corresponding to $W_1$ and $W_2$, 
and therefore the involution mapping $UW_1DW_2$ to $UW_2DW_1$ gives the desired bijection.\footnote{We have not been able to find this simple
argument in the literature.} We will define this more carefully in Section \ref{extension}.

By bijecting Dyck paths of semilength $n$ with $k-1$ up-steps at even height to Dyck paths with $k$ up-steps at odd height using the involution just described, followed by application of the map $\phi$ given in the theorem below, we also have a bijection of Dyck paths of semilength $n$ with $k-1$ up-steps at even height to Dyck paths of semilength $n$ with $k$ peaks. Already from inspecting the five Dyck paths of semilength $n=3$ one can see that this bijection is different from the one presented in \cite{osborn2010}.

We note that the extension of the bijection in \cite{osborn2010} to bilateral Dyck paths needs a slightly modified definition of peaks. More precisely, in \cite{osborn2010} an initial down-step or final up-step in a bilateral Dyck path is also counted as a peak, and the bijection given is between bilateral Dyck paths of semilength $n$ with $k-1$ up-steps at even height and bilateral Dyck paths of semilength $n$ with $k$ (modified) peaks.

In contrast, our bijection extends to a bijection between bilateral Dyck paths of semilength $n$ with $k$ up-steps at odd height and bilateral Dyck paths of semilength $n$ with $k$ peaks.

Note that the restriction of our bijection to Dyck paths is essentially identical to a bijection given in \cite{sapounakis2009}. There, the authors used two length-preserving bijections on Dyck paths to show the equidistribution of statistics of certain strings occurring at odd height, at even height, and anywhere. However, the problem discussed here, regarding the equidistribution of up-steps at odd height and peaks anywhere, was not addressed in \cite{sapounakis2009}.

\section{The Bijection on Dyck Paths}

\label{result}

If $P$ is a Dyck path of nonzero length with $s \geq 0$ down steps at height $2$ before the first contact, the corresponding Dyck word $W_P$ can be uniquely decomposed into a word of the form $UUW_1DUW_2DU\ldots W_sDDW_{s+1} =  U \Big( \prod_{i=1}^s U W_i D \Big) DW_{s+1} $ where each $W_i$ is a Dyck word. 
Using this decomposition, we define the map $\phi: \cal D \rightarrow \cal D$ recursively by setting
$$\phi(W_P) = \phi \Big( U \Big( \prod_{i=1}^s U W_i D \Big) D  W_{s+1} \Big) = 
 \Big( \prod_{i=1}^s U \phi(W_i) \Big) (UD)D^s\phi(W_{s+1})\;.$$
and $\phi(\epsilon) = \epsilon$, where $\epsilon$ denotes the empty word. 

As the decomposition expresses a Dyck word in terms of strictly smaller Dyck words, the recursion terminates. Therefore $\phi$ is well-defined on all Dyck words.

\begin{theorem}
\label{theorem1}
$\phi$ gives an explicit bijection from the set of Dyck paths of semilength $n$ with $m$ contacts and $k$ up-steps at odd height to the set of Dyck paths of semilength $n$ with $m$ contacts and $k$ peaks.
\end{theorem}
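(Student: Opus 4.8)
The plan is to argue by induction on the semilength, using the defining decomposition $W_P = U\big(\prod_{i=1}^s UW_iD\big)DW_{s+1}$ and its image $\phi(W_P)=\big(\prod_{i=1}^s U\phi(W_i)\big)(UD)D^s\phi(W_{s+1})$. Since each $W_i$ and $W_{s+1}$ has strictly smaller semilength than $W_P$, all three assertions reduce to the base case $\phi(\epsilon)=\epsilon$ together with additivity across the decomposition. First I would record the heights along the image: the staircase $U\phi(W_1)U\phi(W_2)\cdots U\phi(W_s)$ climbs from $0$ to $s$, each $\phi(W_i)$ being an excursion at base height $i$; the block $(UD)$ is a single peak reaching height $s+1$; then $D^s$ descends to $0$, after which $\phi(W_{s+1})$ runs at base height $0$. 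In particular the image lies in $\cal D$, and its first return to height $0$ occurs exactly at the end of $D^s$ (at the $D$ of $(UD)$ when $s=0$).

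Granting the inductive hypothesis, the three invariants follow by bookkeeping. For the semilength, counting letters on both sides and using $n(\phi(W_i))=n(W_i)$ gives equality. For contacts, recall that an initial up-step of a Dyck path ends at height $1$, so it is an up-step at height $1$ and never a contact; hence the only contact contributed outside the sub-words is a single down-step at height $1$ — the final step of $D^s$ on the image side, and the down-step of the first contact on the domain side — so in either direction exactly one contact is added to those of $W_{s+1}$. The statistic is the crucial point: because $W_i$ sits at the even base height $2$ and $W_{s+1}$ at base $0$, shifting by an even amount preserves the parity of every step height, so the count of up-steps at odd height is additive, with the initial $U$ (at height $1$) giving the one extra. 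On the image side peaks are adjacency-local, so the peaks of the $\phi(W_i)$ and of $\phi(W_{s+1})$ simply add up (the internal seams produce only patterns $UU$ or $DU$), with the explicit block $(UD)$ contributing exactly one more. Thus the number of up-steps at odd height of $W_P$ and the number of peaks of $\phi(W_P)$ are each one plus the corresponding totals over $W_1,\dots,W_s,W_{s+1}$, and these totals agree by the inductive hypothesis.

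It remains to prove that $\phi$ is a bijection, and this I expect to be the main obstacle. Since $\phi$ preserves semilength and maps the finite set $\cal D$ to itself, it suffices to invert it, which amounts to showing that every nonempty Dyck word $V$ admits a \emph{unique} decomposition $V=\big(\prod_{i=1}^s UV_i\big)(UD)D^sV_{s+1}$ with Dyck words $V_i$. I would obtain this in two stages: split off the first-return (prime) component $V'$, so that $V=V'V_{s+1}$ with $V_{s+1}$ uniquely determined; then read $s$ off $V'$ as one less than the length of its terminal run of $D$'s, recover $V_1,\dots,V_s$ from the remaining prefix $U V_1\cdots U V_s$ by cutting at the last visit to each of the heights $0,1,\dots,s-1$, and set $V_i=\phi^{-1}(V_i')$ recursively. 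Defining $\psi$ by the reverse recursion $\psi(V)=U\big(\prod_{i=1}^s U\psi(V_i)D\big)D\,\psi(V_{s+1})$ and checking $\psi\circ\phi=\mathrm{id}$, using that the forward and reverse decompositions are mutually inverse at each level, then completes the proof.

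The genuinely delicate step is this uniqueness of the reverse decomposition: one must be sure that the terminal down-run of the prime component really detects $s$ and is not corrupted by trailing descents inside $V_s$, and that the prefix $U V_1\cdots U V_s$ factors uniquely into excursions. Both hinge on the structural fact recorded in the first paragraph — that the image climbs monotonically through the heights $1,\dots,s+1$ with a single top peak $(UD)$ immediately preceding the full descent $D^{s+1}$, whose leading up-step is the last $U$ of $V'$. I would make that observation precise and reuse it to justify the inversion, after which injectivity, and hence bijectivity on each finite semilength class, is automatic.
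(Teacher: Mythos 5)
Your proposal is correct and follows essentially the same route as the paper: the same recursive decomposition for the forward direction, the same dual decomposition $\big(\prod_{i=1}^{s}UV_i\big)UD^{s+1}V_{s+1}$ defining the inverse $\psi$, and induction on semilength to verify the statistics and the composition identities. Your write-up is somewhat more explicit than the paper's (which compresses the statistic-counting into ``each recursive iteration maps one up-step at odd height to a peak'' and asserts the uniqueness of the inverse decomposition without the height-profile justification you supply), but the underlying argument is identical.
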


\begin{proof}

The decomposition implies that the parity of a step in the path corresponding to the sub-word 
$W_i$ is the same as its parity in the original path $P$.
As the first step of each sub-path corresponding to nonempty $W_i$ is at odd height, each recursive iteration either maps the empty word to itself or
maps exactly one up-step at odd height (the first step of the path $P$) to a peak. 
Thus $\phi$ maps a path with $k$ up-steps at odd height to a path with $k$ peaks. 
Clearly, $\phi$ does not change the number of contacts.

We now show that $\phi$ is indeed a bijection by giving its inverse. If $P$ is a Dyck path with right-most peak at height $s+1$ with $s\geq0$, the corresponding Dyck word $W_P$ can be uniquely decomposed into a word of the form $UW_1UW_2U\ldots UW_{s}(UD)D^{s}W_{s+1} = \Big( \prod_{i=1}^{s} U W_i \Big) UD^{s+1}W_{s+1} $ where each $W_i$ is a Dyck word. Let $\psi: \cal D \rightarrow \cal D$ be defined recursively as
$$\psi(W_P) = \psi \Big( \Big( \prod_{i=1}^{s} U W_i \Big) UD^{s+1}W_{s+1}\Big) =  U \Big( \prod_{i=1}^{s} U \psi(W_i) D \Big) D\,\psi(W_{s+1}) $$
with  $\phi(\epsilon) = \epsilon$, where $\epsilon$ denotes the empty word.
We note in passing that each recursive iteration of $\psi$ to a nonempty path maps exactly one peak to an up-step at odd height.

To show that $\psi$ is the inverse of $\phi$, we proceed by induction on $n$, the semilength of word. 
For $n=0$, 
$$\psi(\phi(\epsilon))=\psi(\epsilon)=\epsilon\quad\mbox{and}\quad\phi(\psi(\epsilon))=\phi(\epsilon)=\epsilon\;.$$
Now take a Dyck word $W$ of semilength $n>0$ and assume that $\psi(\phi(W')) = \phi(\psi(W')) = W'$ for all words $W'$ of semilength less than $n$. $W$ can be decomposed as either $W =  U \Big( \prod_{i=1}^s U W_i D \Big) D W_{s+1} $ or $W = \Big( \prod_{i=1}^s U W_i \Big) UD^{s+1}W_{s+1} $, where the $W_i$ are Dyck words of semilength strictly less than $n$. Hence, by the inductive hypothesis,

\begin{align*} \psi(\phi(W)) & = \psi\Big(\phi \Big( U \Big( \prod_{i=1}^s U W_i D \Big) D  W_{s+1} \Big) \Big)  =  \psi\Big(\Big( \prod_{i=1}^s U \phi(W_i) \Big) UD^{s+1}   \phi(W_{s+1}) \Big) \\ 
 & =  U \Big( \prod_{i=1}^s U \psi(\phi(W_i)) D \Big) D \, \psi(\phi(W_{s+1}))  =  U \Big( \prod_{i=1}^s U W_i D \Big) D  W_{s+1}  = W_P\;\text{, and}\\
\phi(\psi(W)) & = \phi\Big(\psi \Big( \Big( \prod_{i=1}^s U W_i \Big) UD^{s+1} W_{s+1}\Big) \Big) = \phi \Big( U \Big( \prod_{i=1}^s U \psi(W_i) D \Big) D \,\psi(W_{s+1}) \Big)\\
& = \Big( \prod_{i=1}^s U \phi(\psi(W_i)) \Big) UD^{s+1} \phi(\psi(W_{s+1})) = \Big(\prod_{i=1}^s U W_i \Big) UD^{s+1} W_{s+1} = W_P\;.
\end{align*}

\end{proof}

\begin{corollary}
The number of Dyck paths of semilength $n$ with $m$ contacts and $k$ up-steps at odd height is equal to the number of Dyck paths of semilength $n$ with $m$ contacts and $k$ peaks.
\end{corollary}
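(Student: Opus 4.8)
The plan is to obtain this statement as an immediate consequence of Theorem~\ref{theorem1}. Fix integers $n$, $m$, and $k$, and let $A$ denote the set of Dyck paths of semilength $n$ with $m$ contacts and $k$ up-steps at odd height, and let $B$ denote the set of Dyck paths of semilength $n$ with $m$ contacts and $k$ peaks. Both $A$ and $B$ are finite sets, so it suffices to exhibit a bijection between them. Theorem~\ref{theorem1} provides exactly this: it asserts that $\phi$ restricts to a bijection from $A$ to $B$. I would therefore simply invoke the elementary fact that a bijection between finite sets forces them to have the same cardinality, concluding $|A| = |B|$, which is precisely the claimed equality of counts.

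The only point that needs confirming is that Theorem~\ref{theorem1} genuinely yields the required restricted bijection, and this is exactly what its proof has established. There it is shown that $\phi$ preserves both semilength and the number of contacts, that it sends up-steps at odd height to peaks (so that the image of $A$ lies in $B$), and that the explicit map $\psi$ serves as a two-sided inverse. Together these facts say that the restriction of $\phi$ to $A$ is a bijection onto $B$, which is all that the counting statement requires.

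I anticipate no genuine obstacle here, since all of the combinatorial substance was already carried out in constructing and verifying the bijection $\phi$. The equinumeracy is a purely formal consequence of having the bijection in hand. If one wished to add context, one could observe that combining this equality with the classical enumeration of either statistic by the Narayana numbers $N(n,k)$ recovers the fact that both statistics follow the Narayana distribution; but the present corollary asks only for the equality of the two counts, and that is immediate.
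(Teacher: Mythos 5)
Your proposal is correct and matches the paper exactly: the corollary is stated without further proof precisely because it is the immediate cardinality consequence of the bijection $\phi$ established in Theorem~\ref{theorem1}. Nothing more is needed.
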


\section{An Example}

\begin{figure}[ht!]
\begin{center}\includegraphics[width=0.9\textwidth]{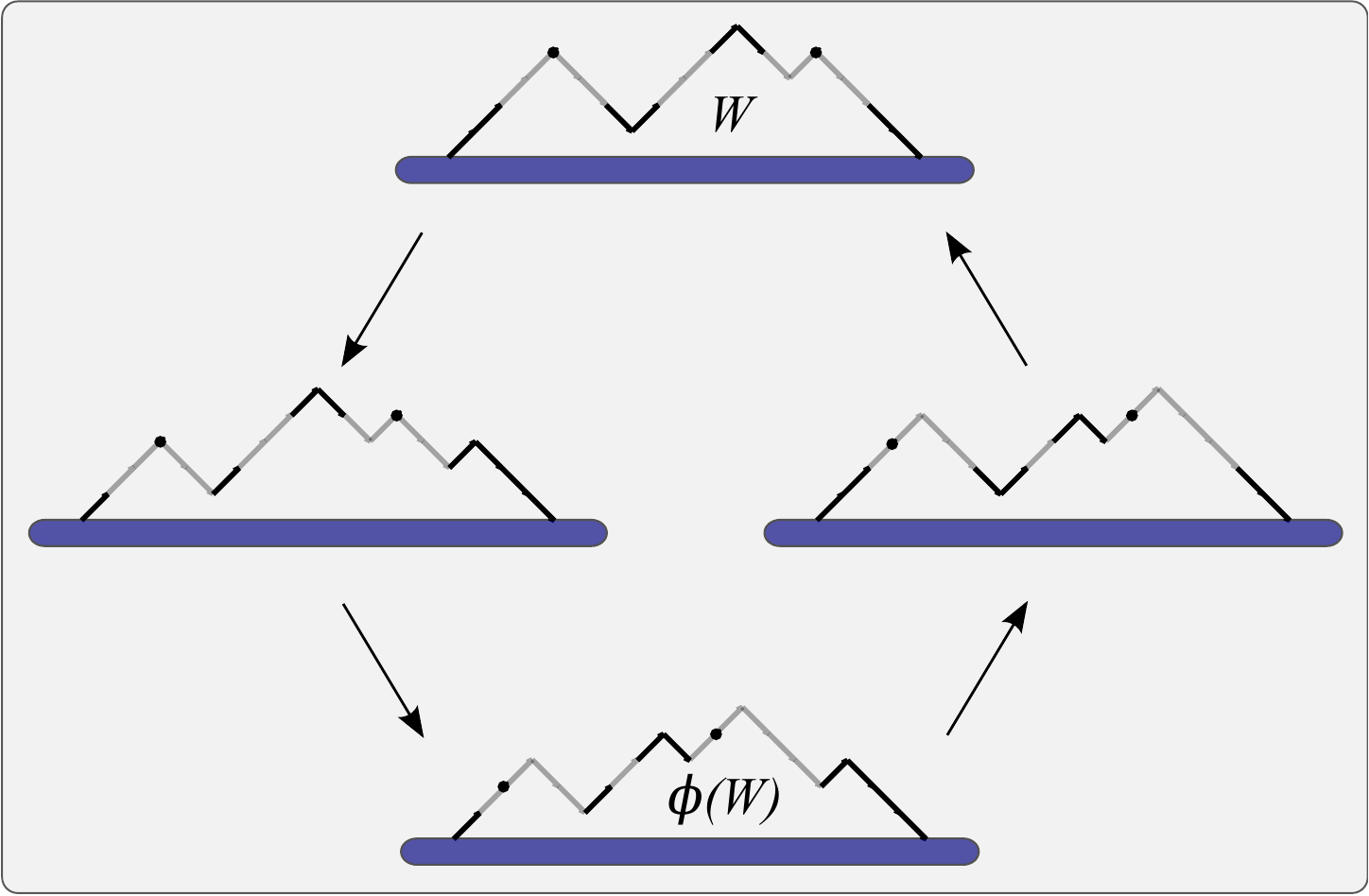}\end{center}
\caption{$\phi$ mapping a Dyck path $W$ (top) with $18$ steps, $4$ up-steps at odd height and $1$ contact to a Dyck path $\phi(W)$ (bottom) with $18$ steps, $4$ peaks and $1$ contact, and $\psi$ performing the inverse mapping. Intermediate stages after each recursive iteration of $\phi$ and $\psi$ are shown on the left and right, respectively. Black dots correspond to the occurrence of empty words in the decomposition of the corresponding words.}
\label{fig}
\end{figure}

An example of the bijection $\phi$ and its inverse $\psi$ is given in Figure \ref{fig}, explicitly showing the intermediate stages after each recursive iteration. When writing the corresponding words below, we
insert bracketing corresponding to the decompositions, e.g.
we write 
$$UUUUDDDUUUUDDUDDDD=UU\Big(UU\big(\big)DD\Big)DU\Big(UU\big(UD\big)DU\big(\big)DD\Big)DD\;.$$

Note in particular
that the double bracket $()$ signifies an empty word in the decomposition. For the sake of legibility, we have avoided the occurrence of composite words in this example.

The mapping $\phi$ corresponding to the left arrows in Figure \ref{fig} is now performed as follows.

\begin{align*}
\phi\bigg(UU\Big(UU\big(&\big)DD\Big)DU\Big(UU\big(UD\big)DU\big(\big)DD\Big)DD\bigg)&\text{top path}\\
&=U\phi\Big(UU\big(\big)DD\Big)U\phi\Big(UU\big(UD\big)DU\big(\big)DD\Big)UDDD&\text{left path}\\
&=U\Big(U\phi\big(\big)UDD\Big)U\Big(U\phi\big(UD\big)U\phi\big(\big)UDDD\Big)UDDD&\text{bottom path}\\
&=U\Big(U\big(\big)UDD\Big)U\Big(U\big(UD\big)U\big(\big)UDDD\Big)UDDD&\text{bottom path}
\end{align*}

The first recursive iteration of $\phi$ gives
$$\phi(UUW_1DUW_2DD)=U\phi(W_1)U\phi(W_2)UDDD\;,$$
and the second step applies the recursion to the sub-words $W_1$ and $W_2$ respectively. 
In the third step the recursion is applied to empty subwords, and hence the path no longer changes; the corresponding figures look identical. 
Note in particular that while $\phi(UU()DD)=U\phi()UDD=U()UDD$ does not change the word $UUDD$,
the position of the empty word $()$, and correspondingly the position of
the black dot in Figure \ref{fig} changes.

The inverse mapping $\psi$ corresponding to the right arrows in Figure \ref{fig} is performed similarly, with different intermediate stages.
 
\begin{align*}
\psi\bigg(U\Big(U\big(\big)&UDD\Big)U\Big(U\big(UD\big)U\big(\big)UDDD\Big)UDDD\bigg)&\text{bottom path}\\
&=UU\psi\Big(U\big(\big)UDD\Big)DU\psi\Big(U\big(UD\big)U\big(\big)UDDD\Big)DD&\text{right path}\\
&=UU\Big(UU\psi\big(\big)DD\Big)DU\Big(UU\psi\big(UD\big)DU\psi\big(\big)DD\Big)DD&\text{top path}\\
&=UU\Big(UU\big(\big)DD\Big)DU\Big(UU\big(UD\big)DU\big(\big)DD\Big)DD&\text{top path}
\end{align*}

\section{Extension to Bilateral Dyck Paths}

\label{extension}

We now give an extension of $\phi$ to bilateral Dyck paths. Before we do this, we must define the following two maps.

We define the map $\alpha: \cal B \rightarrow \cal B$ as follows.
If $P$ is a bilateral Dyck path of semilength $n$ with corresponding bilateral Dyck word $W_P = A_1A_2 \ldots A_{2n}$, where $A_i \in \{U,D\}$, then
(i) $\alpha(U)=D$, (ii) $\alpha(D)=U$ and (iii) $\alpha(W_P) = \alpha \Big( \prod\limits_{i=1}^{2n} A_i  \Big) = \prod\limits_{i=1}^{2n}  \alpha(A_i)$.

The map $\alpha$ reflects bilateral Dyck paths in the $x-$axis. In particular, it maps nonempty Dyck paths to negative Dyck paths and vice versa. It maps steps at odd height to steps at even height, and vice versa. It also maps peaks to valleys, and maps valleys to peaks.

We define $\beta: \cal D \rightarrow \cal D$ as follows.
If $P$ is a Dyck path of nonzero length, the corresponding Dyck word $W_P$ can be uniquely decomposed into a word of the form $UW_1DW_2$ where $W_1$ and $W_2$ are Dyck words. Then 

$$ \beta(W_P) = \beta(UW_1DW_2) = UW_2DW_1\;.$$

The map $\beta$ is the one mentioned in Section \ref{introduction}, which illustrates the equinumeracy between paths with $k+1$ up-steps at odd height and $k$ up-steps at even height; excepting the first up- and down-steps at height $1$, $\beta$ maps steps at odd height to steps at even height, and vice versa. It preserves the number of peaks or valleys, but does not preserve the number of contacts. We note that both $\alpha$ and $\beta$ are involutions.
We are now in a position to state the extended bijection.

Let $\epsilon$ denote the empty word. We define $\phi': \cal B \rightarrow \cal B$ in the following way:

\begin{itemize}
\item[(a)] If $P$ is a Dyck path with corresponding Dyck word $W_P$, then 
 
$$\phi'(W_P) = \phi(W_P)\;.$$

\item[(b)] If $P$ is a negative Dyck path with corresponding negative Dyck word $W_P$, then

$$\phi'(W_P) = \alpha(\phi(\beta(\alpha(W_P))))\;.$$

\item[(c)] If $P$ is a bilateral Dyck path with $l>0$ crossings then we can uniquely decompose the corresponding bilateral Dyck word $W_P$ into $W_1W_2\ldots W_{l+1} = \prod_{i=1}^{l+1}  W_i$, where each $W_i$ is alternately either a Dyck word of nonzero length or a negative Dyck word. Then

 $$\phi'(W_P) = \phi' \Big( \prod_{i=1}^{l+1}  W_i  \Big) = \prod_{i=1}^{l+1} \phi'(W_i)\;.$$
 
 \end{itemize}
 
\begin{theorem}
$\phi'$ gives an explicit bijection from the set of bilateral Dyck paths of semilength $n$ with $k$ up-steps at odd height to the set of bilateral Dyck paths of semilength $n$ with $k$ peaks.
\end{theorem}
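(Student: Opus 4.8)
The plan is to prove the statement by establishing, for each of the three cases (a), (b), (c) in the definition of $\phi'$, that $\phi'$ preserves the semilength, carries the number of up-steps at odd height to the number of peaks, and is a bijection onto the matching class of paths. The three cases partition $\cal B$ (a Dyck path, a negative Dyck path, or a path with $l>0$ crossings), and the empty word falls under (a), so this suffices. Throughout I would write $\mathrm{odd}(P)$, $\mathrm{even}(P)$, $\mathrm{pk}(P)$, $\mathrm{vl}(P)$ for the numbers of up-steps at odd height, up-steps at even height, peaks and valleys of $P$, and record three facts: \textbf{(L1)} $\mathrm{pk}(R)=\mathrm{vl}(R)+1$ for every nonzero Dyck path $R$, obtained by writing $R=U^{a_1}D^{b_1}\cdots U^{a_m}D^{b_m}$, which has $m$ peaks and $m-1$ valleys; \textbf{(L2)} the stated properties of $\alpha$, namely $\mathrm{pk}(\alpha(X))=\mathrm{vl}(X)$, and $\mathrm{odd}(P)=\mathrm{even}(\alpha(P))$ for a negative Dyck path $P$ (an up-step at odd height of $P$ becomes a down-step at even height of the Dyck path $\alpha(P)$, whose number equals $\mathrm{even}(\alpha(P))$); and \textbf{(L3)} $\mathrm{odd}(\beta(R))=1+\mathrm{even}(R)$ for every nonzero Dyck path $R$, which is the parity-flip of the interior of $UW_1DW_2$ under the exchange of $W_1$ and $W_2$ together with the fixed outer step at height $1$, as described in Section~\ref{extension}.

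Case (a) is immediate, since there $\phi'=\phi$ and Theorem~\ref{theorem1} supplies both the statistic correspondence and the bijection. For Case (b) I would track the statistic along $\phi'(P)=\alpha(\phi(\beta(\alpha(P))))$. Put $Q=\alpha(P)$, a nonzero Dyck path; then $\beta(Q)$ and $\phi(\beta(Q))$ are nonzero Dyck paths as well, so (L1) applies to $\phi(\beta(Q))$, and
\begin{align*}
\mathrm{pk}(\phi'(P)) &= \mathrm{vl}(\phi(\beta(Q))) = \mathrm{pk}(\phi(\beta(Q)))-1 = \mathrm{odd}(\beta(Q))-1 \\
&= \bigl(1+\mathrm{even}(Q)\bigr)-1 = \mathrm{even}(Q) = \mathrm{odd}(P),
\end{align*}
using (L2) for the first and last equalities, (L1) for the second, Theorem~\ref{theorem1} for the third, and (L3) for the fourth; the $-1$ and $+1$ cancel. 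Thus $\phi'$ preserves the statistic on negative Dyck paths, and since $\alpha$ and $\beta$ are involutions and $\phi$ is a bijection, the composite is a bijection from negative Dyck paths to negative Dyck paths.

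For Case (c) I would show that the map and both statistics factor through the crossing decomposition. Since neither a Dyck word nor a negative Dyck word contains a crossing, the decomposition $W_P=\prod_{i=1}^{l+1}W_i$ into alternating nonzero Dyck and negative Dyck pieces is precisely the splitting at the $l$ crossings and is therefore unique. Each piece has the $x$-axis as baseline, so a step inside $W_i$ has the same height in $W_i$ as in $P$; hence $\mathrm{odd}(P)=\sum_i\mathrm{odd}(W_i)$. Moreover each boundary between consecutive pieces is a crossing, hence of the form $DD$ or $UU$, so no peak straddles a boundary and $\mathrm{pk}$ is additive as well. Because $\phi'$ preserves the sign of each piece by Cases (a) and (b), the output $\prod_i\phi'(W_i)$ again alternates in sign and has exactly $l$ crossings, so the same additivity holds for it; combined with $\mathrm{pk}(\phi'(W_i))=\mathrm{odd}(W_i)$ from Cases (a) and (b) this yields $\mathrm{pk}(\phi'(P))=\mathrm{odd}(P)$. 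Bijectivity follows because the decomposition of the output is canonical and sign-matched, so a preimage is obtained by inverting each piece with the inverses from Cases (a) and (b).

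The step I expect to be the main obstacle is the bookkeeping in Case (b): transporting ``up-steps at odd height'' of a negative path to ``peaks'' of its image through the reflection $\alpha$ and the involution $\beta$ depends on the exact parity-flipping behaviour of these maps and on the peak-versus-valley identity (L1), and an off-by-one in any of them would destroy the correspondence. A secondary point requiring care in Case (c) is confirming that neither peaks nor crossings straddle the piece boundaries, so that both statistics and the number of crossings are genuinely additive and the inverse can be applied piece by piece.
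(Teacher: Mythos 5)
Your proposal is correct and follows essentially the same route as the paper: case analysis over (a), (b), (c), with the statistic in case (b) tracked through $\alpha$, $\beta$, $\phi$ via exactly the same $k\mapsto k\mapsto k+1\mapsto k+1\text{ peaks}\mapsto k\text{ valleys}\mapsto k$ cancellation, and additivity over the crossing decomposition in case (c). The only cosmetic difference is that the paper establishes bijectivity by writing out the inverse $\psi'$ explicitly and verifying both compositions, whereas you argue piecewise invertibility directly; the substance is the same.
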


\begin{proof} 
By Theorem \ref{theorem1}, (a) maps Dyck paths with $k$ up-steps at odd height to Dyck paths with $k$ peaks. 
 
If $P$ is a negative Dyck path with $k$ up-steps at odd height and corresponding negative Dyck word $W_P$, then $\alpha(W_P)$ corresponds to a Dyck path with $k$ up-steps at even height. Applying $\beta$ to this new path gives a Dyck path with $k+1$ up-steps at odd height. Then $\phi(\beta(\alpha(W_P)))$ corresponds to a Dyck path with $k+1$ peaks (and so $k$ valleys). Finally, applying $\alpha$ again will give a negative Dyck path with $k$ peaks. Thus (b) maps negative Dyck paths with $k$ up-steps at odd height to negative Dyck paths with $k$ peaks.

Application of (c) does not alter the parity of steps or the number of peaks. 
Thus $\phi'$ maps bilateral Dyck paths with $k$ up-steps at odd height to bilateral Dyck paths with $k$ peaks.

We now show that $\phi'$ is a bijection by giving its inverse. Let $\psi': \cal B \rightarrow \cal B$, be defined as follows.

\begin{itemize}

\item [(a')] If $P$ is a Dyck path with corresponding Dyck word $W_P$, then 
 
$$\psi'(W_P) = \psi(W_P)\;.$$

\item [(b')] If $P$ is a negative Dyck path with corresponding negative Dyck word $W_P$, then

$$\psi'(W_P) = \alpha(\beta(\psi(\alpha(W_P))))\;.$$

\item [(c')] If $P$ is a bilateral Dyck path with $l>0$ crossings then we can uniquely decompose the corresponding bilateral Dyck word $W_P$ into $W_1W_2\ldots W_{l+1} = \prod_{i=1}^{l+1}  W_i$, where each $W_i$ is alternately either a Dyck word of nonzero length or a negative Dyck word. Then

 $$\psi'(W_P) = \psi' \Big( \prod_{i=1}^{l+1}  W_i  \Big) = \prod_{i=1}^{l+1} \psi'(W_i)\;.$$

\end{itemize}

To show that $\psi'$ is the inverse of $\phi'$, we proceed as follows. Let $W$ be a bilateral Dyck word. If the path associated to $W$ has $l>0$ crossings then we can uniquely decompose $W$ into $W_1W_2\ldots W_{l+1} = \prod_{i=1}^{l+1}  W_i$, where each $W_i$ is alternately either a Dyck word of nonzero length or a negative Dyck word. Then

\begin{align*}
 \psi'(\phi'(W)) &= \psi'\Big(\phi' \Big( \prod_{i=1}^{l+1}  W_i  \Big)\Big) = \psi'\Big(\prod_{i=1}^{l+1} \phi'(W_i)\Big) = \prod_{i=1}^{l+1} \psi'(\phi'(W_i))\text{, and}\\
\phi'(\psi'(W)) &= \phi'\Big(\psi' \Big( \prod_{i=1}^{l+1}  W_i  \Big)\Big) = \phi'\Big(\prod_{i=1}^{l+1} \psi'(W_i)\Big) = \prod_{i=1}^{l+1} \phi'(\psi'(W_i))\;.
  \end{align*}
 
 If $W$ is a Dyck word then $\phi'=\phi$ and $\psi'=\psi$ so 
 \begin{align*}
 \psi'(\phi'(W)) &= \psi(\phi(W)) = W \text{ and}\\
 \phi'(\psi'(W)) &= \phi(\psi(W)) = W\;.
 \end{align*}

Using the fact that $\phi$ and $\psi$ are inverses of each other, together with the involutive properties of $\alpha$ and $\beta$, it is easy to show that for negative Dyck words $\psi'(\phi'(W))=W=\phi'(\psi'(W))$ also holds:

\begin{align*}
\psi'(\phi'(W)) &= \psi'(\alpha(\phi(\beta(\alpha(W))))
=\alpha(\beta(\psi(\alpha(\alpha(\phi(\beta(\alpha(W))))))))\\    &= \alpha(\beta(\psi(\phi(\beta(\alpha(W)))))) = \alpha(\beta(\beta(\alpha(W)))) = \alpha(\alpha(W)) = W \text{, and}\\
\phi'(\psi'(W)) &= \phi'(\alpha(\beta(\psi(\alpha(W))))
=\alpha(\phi(\beta(\alpha(\alpha(\beta(\psi(\alpha(W))))))))\\    &= \alpha(\phi(\beta(\beta(\psi(\alpha(W)))))) = \alpha(\phi(\psi(\alpha(W)))) = \alpha(\alpha(W)) = W\;.
\end{align*}

\end{proof}

\begin{corollary}
The number of bilateral Dyck paths of semilength $n$ with $k$ up-steps at odd height is equal to the number of bilateral Dyck paths of semilength $n$ with $k$ peaks.
\end{corollary}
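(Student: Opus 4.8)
The plan is to obtain this enumerative identity as an immediate consequence of the preceding theorem, in which the bijection $\phi'$ was already constructed and shown to be well-defined together with an explicit inverse $\psi'$. Since all of the combinatorial substance lives in that theorem, the corollary is a purely formal deduction and requires no further construction.

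Concretely, I would fix $n$ and $k$, let $A$ denote the set of bilateral Dyck paths of semilength $n$ with exactly $k$ up-steps at odd height, and let $C$ denote the set of bilateral Dyck paths of semilength $n$ with exactly $k$ peaks. The preceding theorem asserts that $\phi'$ is a bijection on the whole class $\cal B$ of bilateral Dyck paths which preserves semilength and sends the number of up-steps at odd height to the number of peaks. Consequently $\phi'$ maps $A$ into $C$, its inverse $\psi'$ maps $C$ into $A$, and these two restrictions remain mutually inverse. As $A$ and $C$ are finite, the existence of such a bijection forces $|A|=|C|$, which is exactly the asserted equality.

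I anticipate no genuine obstacle here; the single point to verify is that restricting $\phi'$ to the paths carrying a prescribed statistic value $k$ yields a well-defined map onto the correspondingly restricted target, and this is immediate from the statistic-preservation built into the preceding theorem. Equivalently, one could read off the identity by comparing coefficients in the bivariate generating function tracking semilength and the common Narayana statistic, but the bijective phrasing is both shorter and more in keeping with the structural aim of the paper.
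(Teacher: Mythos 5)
Your proposal is correct and follows exactly the paper's route: the corollary is stated as an immediate consequence of the preceding theorem, since the bijection $\phi'$ preserves semilength and carries the count of up-steps at odd height to the count of peaks, so restricting to fixed $n$ and $k$ gives a bijection between the two finite sets in question.
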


\section*{Acknowledgements}

The authors wish to thank Judy-anne Osborn for helpful comments on an early version of the manuscript, and the referees for their careful comments on our work.


\begin{thebibliography}{10}

\bibitem{chen2009}
W.~Y.~C.~Chen, S.~X.~M.~Pang, E.~X.~Y.~Qu, and R.~P.~Stanley,
 Pairs of noncrossing free Dyck paths and noncrossing partitions,
\newblock{\em Discrete Mathematics} 309 (2009) 2834--2838.

\bibitem{deutsch1999}
E.~Deutsch,
 Dyck path enumeration,
\newblock{\em Discrete Mathematics} 204 (1999) 167--202.
 
\bibitem{kreweras1986} 
G.~Kreweras, 
Joint distributions of three descriptive parameters of bridges, 
\newblock{\em Lecture Notes in Mathematics}, volume 1234, Springer, Berlin (1986) 177--191.

\bibitem{manes2012}
K.~Manes, A.~Sapounakis, I.~Tasoulas and P.~Tsikouras,
Strings of length 3 in Grand-Dyck paths
and the Chung-Feller property,
\newblock{\em The Electronic Journal of Combinatorics}, volume 19, Issue 2 (2012), paper 2.


\bibitem{osborn2010}
J.~H. Osborn,
 Bi-banded paths, a bijection and the Narayana numbers,
\newblock{\em Australasian Journal of Combinatorics} Volume {\bf 48} (2010) 243--252.

\bibitem{sapounakis2009}
A.~Sapounakis, I.~Tasoulas, and P.~Tsikouras,
 Enumeration of strings in Dyck Paths: A bijective approach,
\newblock{\em Discrete Mathematics} 309 (2009) 3032--3039.

\bibitem{sulanke1998}
R.~A.~Sulanke,
 Catalan path statistics having the Narayana distribution,
\newblock{\em Discrete Mathematics} 180 (1998) 369--389. 

\end{thebibliography}
\end{document}